\definecolor{blue(munsell)}{rgb}{0.0, 0.5, 0.69}
\numberwithin{equation}{section}
\theoremstyle{plain}
\newtheorem{theorem}{Theorem}[section]
\newtheorem{lemma}[theorem]{Lemma}
\newtheorem{proposition}[theorem]{Proposition}
\newtheorem{corollary}[theorem]{Corollary}
\theoremstyle{definition}
\newtheorem{definition}[theorem]{Definition}
\newtheorem{remark}[theorem]{Remark}
\newtheorem{example}[theorem]{Example}
\newtheorem{examples}[theorem]{Examples}
\newcommand{\K}{\mathcal{K}}
\newcommand{\catk}{\mathcal{K}}
\newcommand{\catl}{\mathcal{L}}
\newcommand{\catd}{\mathcal{D}}
\newcommand{\cata}{\mathcal{A}}
\newcommand{\Set}{\mathcal{S}et}
\newcommand{\Rel}{\mathcal{R}el}
\newcommand{\Ab}{\mathcal{A}b}
\newcommand{\Pos}{\mathcal{P}os}
\newcommand{\Mod}{\mathcal{M}od}
\renewcommand{\Vec}{\mathcal{V}ec}
\newcommand{\Bool}{\mathcal{B}ool}
\newcommand{\Met}{\mathcal{M}et}
\newcommand{\CMet}{\mathcal{CM}et}
\newcommand{\id}{\mathrm{id}}
\def\op{\mathrm{op}}
\newcommand{\gr}{\text{grade\hspace*{0.15mm}}}
\newcommand{\rk}{\text{grade\hspace*{0.15mm}}}
\renewcommand{\epsilon}{\varepsilon}
\renewcommand{\phi}{\varphi}
\title{A finitary adjoint functor theorem}
\author{Ji\v r\'{\i} Ad\'amek}
\address{
Department of Mathematics, Faculty of Electrical Engineering\\
Czech Technical University in Prague, Czech Republic\newline
\indent Institute for Theoretical Computer Science, Technical University of Braunschweig, Germany
}
\email{j.adamek@tu-bs.de}
\thanks{Supported by the Grant Agency of the Czech Republic: Grant 22-02964S}
\author{Lurdes Sousa}
\address{University of Coimbra, Department of Mathematics, CMUC, 3000-143 Coimbra, Portugal  \newline \indent Polytechnic  of Viseu, ESTGV, Portugal}
\email{sousa@estv.ipv.pt}
\thanks{Partially supported by the Centre for Mathematics of the University of Coimbra (funded by the Portuguese Government through FCT/MCTES, DOI 10.54499/UIDB/00324/2020)}
\subjclass[2010]{18A22,
18A35,
18A40,
18B05}
\keywords{Locally finitely presentable categories, finitary functors}
\date{\today}
\begin{document}

\maketitle

{\hfill \dedicatory{\em To the memory of V\v era Trnkov\'a}}

\begin{abstract}Graduated locally finitely presentable  categories are introduced, examples include categories of sets, vector spaces, posets, presheaves and Boolean algebras. A finitary functor between graduated locally finitely presentable  categories is proved to be a right adjoint if and only if it preserves countable limits. For endofunctors on vector spaces or pointed sets even countable products are sufficient. Surprisingly, for set functors there is a single exception of a (trivial) finitary functor preserving countable products  but not countable limits.
\end{abstract}

\section{Introduction}
A functor between locally presentable categories is a right adjoint iff it is accessible and preserves limits \cite[Thm. 1.66]{AR}. We introduce  a wide class of locally finitely presentable categories, called {\em graduated}, and prove that a finitary functor between them is a right adjoint iff it preserves countable limits. Graduation essentially means that every finitely presentable object is assigned a grade in $\mathds{N}$ so that  proper subobjects and  proper strong quotients have lower grades. Examples of graduated categories include categories of
\begin{enumerate}
\item[(1)] sets, posets, Boolean algebras, $M$-sets for finite monoids $M$, and left modules over finite semirings;
\item[(2)] vector spaces, presheaves in $\Set^{\cata^{\op}}$ where $\cata$ has finite connected components, and relational structures of finitary signatures.
\end{enumerate}
Our paper has been inspired by Tendas  who proved the following result for locally finitely presentable categories having (a) only countable many finitely presentable objects (up to isomorphism) and (b) finite hom-sets for them: a finitary functor between such categories preserves limits iff it preserves countable limits \cite[Remark 2.10]{GT}. The examples in (1) above satisfy these conditions, those of (2) do not in general.
Besides, our proof (completely different from that of Tendas) can also be used to include the categories of metric spaces and complete metric spaces to our list of examples.

A second inspiration of our paper is Trnkov\'a's result concerning the question when functors preserving products automatically preserve limits \cite{T3}.
Can one reduce countable limits to countable products? The answer is affirmative for endofunctors of categories such as vector spaces or pointed sets. Surprisingly such a  reduction is almost, but not completely, possible for set functors. Indeed, the functor
$$C_{01}\text{ defined by } \emptyset \mapsto \emptyset \; \; \text{and}\; \; X \mapsto1 \; \; \text{for all $X\not= \emptyset$}$$
preserves all products but not countable limits. This is the single exception: a finitary set functor preserving countable products but not countable limits is naturally isomorphic to $C_{01}$.

\vskip2mm
\noindent \textbf{Acknowledgement.} The authors are grateful to Giacomo Tendas for useful discussions.

\section{Graduated categories}

In this section graduated locally finitely presentable categories are introduced and examples are presented.  In the subsequent section we prove that a finitary functor between graduated categories is a right adjoint iff it preserves countable limits.

\begin{remark}\label{rem:graduated}   The following properties of  locally finitely presentable categories $\K$ are used in the proof of our main theorem:

\begin{enumerate}
\item[(1)] $\K$ is complete and cocomplete (\cite[Rem. 1.56]{AR}).

\item[(2)] $\K$ has (strong epi, mono)-factorizations (\cite[Prop. 1.62]{AR}).

	\item[(3)] There is only a set of finitely presentable objects up to isomorphism.
	
	\item[(4)] For every directed colimit $c_i :C_i \to C\,  (i\in I)$ in $\K$ and every finitely presentable object $K$ each morphism from $K$ to $C$ factorizes through some $c_i$.
	
	\item[(5)] Every object of $\K$ is a directed colimit of finitely presentable objects.
\end{enumerate}
	
	Moreover, we are going to require the following property (that most of ``everyday" locally finitely presentable categories have, but not all):
	
	\begin{enumerate}
\item[(6)] Every subobject and every strong quotient of a finitely presentable object is finitely presentable.
\end{enumerate}
	
\end{remark}

\begin{definition} A locally  finitely presentable   category  is {\em graduated} if   to every  finitely presentable  object $A$ a natural number
$$\gr A$$
({\em the grade}) is  assigned satisfying the following:

\vspace{1mm}

\hspace{15mm}\begin{tabular}{p{11cm}}Every (proper) subobject and every (proper) strong quotient of $A$ is finitely presentable and has grade at most (smaller than, resp.) $\rk A$.\end{tabular}

\end{definition}

\begin{remark}\label{rem:rank}
In particular, isomorphic finitely presentable  objects have the same grade. Moreover, if
 $A$ and  $B$ are finitely presentable objects of the same grade, every monomorphism and every strong epimorphism between them is invertible.
 \end{remark}

\begin{examples}
The following categories are graduated.
\begin{enumerate}
\item $\Set$ and $\Set_p$, the category of pointed sets. Put
$$\gr A=\text{card} A.$$
\item The presheaf category $\Set^{\cata^{\op}}$ where $\cata$ has finite connected components. A presheaf $A\colon \cata^{\op} \to \Set$ is finitely presentable iff the sets $As$ ($s\in \text{obj } \cata$) are finite, and all but finitely many are empty.  (Indeed, the above condition implies that $A$ is finitely presentable due to the object-wise computation of directed colimits of presheaves. Conversely, given a finitely presentable presheaf $A$, let $A_i\, (i\in I)$ be the collection of all subfunctors  mapping all but finitely many components of $\cata^{\op}$ to the empty set. Each $A_i$ fulfils the above condition. Since $A$ is a directed colimit of that collection, it is one of those subfunctors.)

Put
$$\gr  A=\sum_{s\in \cata}\text{card} As.$$
\item $\Pos$, the category of posets. For the graduation we apply the lexicographic order on $\mathbb{N}^2$ and use the induced subposet $\hat{\mathbb{N}}$ of all pairs $(n,k)\in \mathbb{N}^2$ with $k\leq n^2$. This poset is isomorphic to $\mathbb{N}$ under the mapping $\phi\colon \hat{\mathbb{N}}\to \mathbb{N}$ assigning to $(n,k)$ the number of all smaller members of $\hat{\mathbb{N}}$:
$$\begin{array}{lccccc}(n,k)&(0,0)&(1,0)&(1,1)&(2,0)&\dots\\ \hline
\phi(n,k)&0&1&2&3&\dots\end{array}$$
The grade of a poset $(X,R)$, where $R\subseteq X^2$ is the order relation, is 
$$\gr  (X,R)=\phi(|X|,|R|).$$

Given a proper subobject $(X',R')\rightarrowtail (X,R)$ we either have $|X'|<|X|$, or $|X'|=|X|$ and $|R'|<|R|$; thus $\gr (X',R')<\gr (X,R)$.

Consider a strong quotient: it is easy to see that it is invertible in $\Pos$ iff it is carried by a bijection. Thus given a proper strong quotient $(X,R) \twoheadrightarrow (X',R')$, we have $|X'| < |X|$, which yields $\gr (X',R') < \gr (X,R)$.

\item $\Bool$, the category of Boolean algebras. Every finitely presentable Boolean algebra is finite and we put
$$\rk A=\text{card}\, A.$$
\item
$\Omega$-$\Rel$, the category of relational structures of a signature $\Omega=(\Omega_n)_{n\in \mathds{N}}$. Objects are pairs $A=(X, (\omega_A))$ consisting of a set $X$  with relations $\omega_A\subseteq X^n$ for all $\omega \in \Omega_n$. Finitely presentable objects are such that both $X$ and $\coprod_{\omega\in \Omega}\omega_A$ are finite sets. Put
$$\rk A=\text{card}\, X+\sum_{\omega \in \Omega}\text{card}\, \omega_A.$$
 If $B$ is a proper subobject of $A$, and has the same elements, then $\omega_B\not\subseteq \omega_A$ for some $\omega$, thus $\rk B<\rk A$. This inequality also holds if $B$ has less elements than $A$. The argument for proper quotients $B$ is similar: a strong quotient $e\colon A \twoheadrightarrow B$ whose underlying map is bijective is indeed an isomorphism in $\Omega$-$\Rel$.

\item $M$-$\Set$, the category of sets with an action of $M$, for all finite monoids $M$. Every finitely presentable $M$-set $A$ is finite, and we put
$$\gr  A=\text{card} A.$$
In contrast, $M$-$\Set$ is not graduated for the monoid $M=(\mathds{N},+,0)$: That monoid defines a finitely presentable $M$-set $\mathds{N}$ (with monoid action given by addition). The proper $M$-subset $\mathds{N}-\{0\}$ is isomorphic to it, so it cannot have a lower grade.
\item $S$-$\Mod$, the category of left modules, for every finite semiring $S$: here also $\gr  A=\text{card} A.$ Since  free finitely generated semirings are finite, also all finitely presentable objects are finite.

Again this does not hold for infinite semirings. For example the category $\Ab=\mathds{Z}$-$\Mod$ of abelian groups is not graduated: the proper subobject $2\mathds{Z}\hookrightarrow \mathds{Z}$ fulfils $2\mathds{Z}\cong \mathds{Z}$.

\item $K$-$\Vec$, the category of vector spaces over a field $K$.  Put
$$\gr  A=\text{dim} A.$$
\end{enumerate}
\end{examples}

\section{The finitary adjoint functor theorem}

For every locally  finitely presentable   category  $\K$ we denote by $\K_{fp}$ the full subcategory of all finitely presentable objects.

\begin{lemma}\label{lem:fp_subobjs} Every object $K$ of a graduated locally  finitely presentable   category  is the directed colimit of the diagram of all its  finitely presentable  subobjects.
\end{lemma}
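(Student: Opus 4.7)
The plan is to present $K$ as the directed colimit of its finitely presentable subobjects by restricting the standard canonical diagram. Let $\mathcal{J}$ denote the comma category of all morphisms $f\colon A\to K$ with $A$ finitely presentable, and $\mathcal{S}$ its full subposet consisting of monomorphisms $m\colon B\hookrightarrow K$ with $B$ finitely presentable. In any locally finitely presentable category $\mathcal{J}$ is filtered and $K$ is the colimit of the forgetful functor $\mathcal{J}\to \catk$; it therefore suffices to show that the inclusion $\mathcal{S}\hookrightarrow \mathcal{J}$ is final, so that the colimit can be computed over $\mathcal{S}$.

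I would first verify that $\mathcal{S}$ is directed. Given two elements $m_i\colon B_i\hookrightarrow K$, $i=1,2$, the coproduct $B_1+B_2$ is finitely presentable, and the (strong epi, mono)-factorization of $[m_1,m_2]\colon B_1+B_2\to K$ yields $B_1+B_2\twoheadrightarrow B\hookrightarrow K$. The graduation axiom ensures that the strong quotient $B$ is finitely presentable, so the mono $B\hookrightarrow K$ lies in $\mathcal{S}$; each induced map $B_i\to B$ is monic since its composite with $B\hookrightarrow K$ equals the mono $m_i$. Hence $B$ is an upper bound of $m_1$ and $m_2$ in $\mathcal{S}$.

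For finality I would check, for each $f\colon A\to K$ in $\mathcal{J}$, that the comma category $f\downarrow \mathcal{S}$ is nonempty and connected. Nonemptiness: factor $f$ as $e_f\colon A\twoheadrightarrow B_f$ followed by $m_f\colon B_f\hookrightarrow K$; graduation again makes $B_f$ finitely presentable, so $m_f\in \mathcal{S}$ and $e_f$ is an object of $f\downarrow \mathcal{S}$. Connectedness: any two objects $A\to B$ (over $m$) and $A\to B'$ (over $m'$) can be joined through a common upper bound $m''\colon B''\hookrightarrow K$ supplied by directedness of $\mathcal{S}$, since the monic $m''$ uniquely determines the factorization of $f$ through $B''$, forcing the two composites $A\to B\hookrightarrow B''$ and $A\to B'\hookrightarrow B''$ to agree and giving the required zig-zag. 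The only substantive input beyond standard lfp theory is that strong-quotient images of finitely presentable objects are themselves finitely presentable, which is precisely what the graduation axiom provides; finitary codirected limits play no role in this lemma.
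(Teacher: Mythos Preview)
Your proof is correct and follows essentially the same approach as the paper: both pass from the canonical filtered diagram $\catk_{fp}\downarrow K$ to the poset of finitely presentable subobjects via the (strong epi, mono)-factorization, using the graduation axiom solely to ensure that images of finitely presentable objects remain finitely presentable. The only cosmetic difference is that the paper builds the reflection explicitly with the diagonal fill-in and then declares the colimit comparison ``obvious'', whereas you verify finality of $\mathcal{S}\hookrightarrow\mathcal{J}$ via the nonempty-and-connected criterion; your packaging is arguably the more transparent of the two.
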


\begin{proof} Since our  category  $\catk$ is locally finitely presentable, $K$ is the canonical filtered colimit of the diagram
$$D_K\colon \catk_{fp}\downarrow K\to \catk , \; \; (A\xrightarrow{a}K)\mapsto A$$
(see \cite[Prop. 1.22]{AR}). Let $m_a\cdot a'=a$ be a (strong epi, mono)-factorization for each $a\colon A\to K$:
\[\begin{tikzcd}
	A &&& {} & B \\
	& {A'} && {B'} \\
	&& K
	\arrow["{f^{\prime}}", dashed, tail, from=2-2, to=2-4]
	\arrow["{m_a}"', tail, from=2-2, to=3-3]
	\arrow["{m_b}", tail, from=2-4, to=3-3]
	\arrow["{a'}"', two heads, from=1-1, to=2-2]
	\arrow["f", from=1-1, to=1-5]
	\arrow["{b'}", two heads, from=1-5, to=2-4]
\end{tikzcd}\]
For every connecting morphism $f\colon (A,a)\to (B,b)$ of $D_K$ the diagonal fill-in property yields a corresponding monomorphism $f'\colon A'\to B'$.

We thus obtain a diagram $D'_K$ of objects $A'$ and connecting morphisms $f'$. For each  finitely presentable object $A$ the strong  epimorphism $a'$  proves  that $A'$ is  finitely presentable  (since $\catk$ is graduated). Thus $D'_K$ is a directed diagram of  finitely presentable  subobjects of $K$.

Conversely, every  finitely presentable  subobject $m'\colon A'\to A$ has the form $m_a$ for $a=m'$. Thus $D'_K$ is the diagram of \emph{all} finitely presentable  subobjects of $K$.  Its colimit is, obviously, $m_a\colon A'\to K$ for $(A,a)\in \catk_{fp}\downarrow \catk$.
\end{proof}

\begin{remark}\label{rem:count_codir} Let $I$ be a countably   codirected  poset: every countable subset has a lower bound.

(1) Given a decomposition $I=\bigcup_{k\in \mathds{N}}I_k$, some $I_k$ is \emph{initial}, i.e. every element of $I$ lies over some element of $I_k$. Indeed, assuming the contrary, for each $k$ we have a counter-example $i_k \in I$ not majorizing elements of $I_k$. The countable set $\{i_k\}_{k\in \mathds{N}}$ has a lower bound $j\in I$. But this is a contradiction: $j\not\in I_k$ for any $k$.

(2) Given a diagram $D\colon I\to \catk$, for every initial subset $J\subseteq I$ the   limits of $D$ and of its restriction $D/J\colon J\to \catk$, are the same. More precisely: the limit cone $\pi_i\colon L\to Di \; (i\in I)$ of $D$ yields a limit cone $\pi_j\colon L_j\to Dj\; (j\in J)$ of $D/J$, and vice versa.
\end{remark}

\begin{theorem}\label{thm:main} Let $F\colon \catk\to  \catl$ be a  finitary  functor between locally  finitely presentable  categories with $\catk$ graduated. Then $F$ is a right adjoint if and only if it preserves countable limits.
\end{theorem}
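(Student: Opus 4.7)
The \emph{only if} direction is immediate since right adjoints preserve all limits. For the converse, my plan is to invoke the classical adjoint functor theorem \cite[Thm.~1.66]{AR}: a finitary (hence $\omega$-accessible) functor between locally presentable categories is a right adjoint precisely when it preserves all small limits, so it suffices to upgrade countable-limit preservation to full limit preservation. I would first reduce to codirected limits: every small limit is built from products and equalisers, equalisers are countable, and an arbitrary product $\prod_{i\in I}K_i$ is the codirected limit of its finite sub-products (indexed by the codirected set of finite subsets of $I$), while those sub-products are themselves countable. So the theorem reduces to showing that $F$ preserves codirected limits.

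Next, I would test against finitely presentable objects of $\catl$. Fixing a codirected $D\colon I\to\catk$ with limit cone $\pi_i\colon L\to D_i$ and forming $H\coloneqq\catl(B,F-)\colon\catk\to\Set$ for $B\in\catl$ finitely presentable, the composite $H$ is again finitary and preserves countable limits. Since finitely presentable objects form a strong generator of $\catl$, $F$ preserves the given codirected limit iff $HL\to\lim HD_i$ is bijective for every such $B$. This reduces the whole theorem to the claim that a finitary set-valued functor $H$ on $\catk$ preserving countable limits also preserves the codirected limit $L=\lim D_i$.

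For injectivity of $HL\to\lim HD_i$: if $H\pi_i(x)=H\pi_i(y)$ for all $i$, then by Lemma~\ref{lem:fp_subobjs} and finitarity of $H$, both $x$ and $y$ are realised in a common finitely presentable subobject $m_A\colon A\hookrightarrow L$, say $x=Hm_A(x')$ and $y=Hm_A(y')$. The finitary codirected limit property gives some $\pi_{i_0}m_A$ monic; and since monomorphisms in a locally finitely presentable category are regular (equalisers of their kernel pairs), $H$ preserves them, so $H(\pi_{i_0}m_A)$ is injective on $HA$ and hence $x=y$.

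Surjectivity will be the hard part and is where the graduated hypothesis becomes essential. Given a compatible family $(y_i)\in\lim HD_i$, I would try to construct a single finitely presentable $A\in\catk$, a cone $(a_i\colon A\to D_i)_{i\in I}$ over $D$, and an element $x'\in HA$ with $Ha_i(x')=y_i$ for every $i$; the induced $a\colon A\to L$ then gives $x\coloneqq Ha(x')$ as the required preimage. The plan is to start by factoring $y_{i_0}$ (for one chosen $i_0\in I$) through an fp subobject of $D_{i_0}$ of \emph{minimum grade}, and then to propagate this witness along the diagram using pullbacks, which $H$ computes since it preserves finite limits. Remark~\ref{rem:rank} should then force the strong-epi parts of the induced factorisations to be isomorphisms, rigidifying the witness into a coherent family of morphisms $A_0\to D_i$. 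Stitching these local witnesses into a genuine cone over the possibly uncountable index $I$ from only countable-limit data is the main obstacle; the graduated structure is tailored to supply exactly the two ingredients required — a well-founded grade invariant for induction, and the rigidity of minimum-grade witnesses via Remark~\ref{rem:rank}.
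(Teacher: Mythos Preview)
Your reduction step is where the plan goes astray. Writing an arbitrary product as the codirected limit of its finite sub-products yields a merely \emph{codirected} index (finite subsets under reverse inclusion), not a \emph{countably} codirected one; yet the mechanism that actually makes the surjectivity argument work is a pigeon-hole principle that requires countable codirectedness. The paper's reduction is different and essential: every limit is a countably codirected limit of limits over countable subdiagrams (Remark~\ref{rem:count_codir}(2)). Since $F$ preserves countable limits by hypothesis, only countably codirected limits remain, and this is not a cosmetic distinction.

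For surjectivity over such a countably codirected $I$, one does not start from a single $i_0$ and propagate. Instead, for \emph{every} $i$ one constructs the least finitely presentable subobject $m_i\colon M_i\hookrightarrow D_i$ through which $q_i$ factors (this minimum exists by a finite grade-descent, using that $F$ preserves pullbacks), and then partitions $I=\bigcup_n I_n$ by $I_n=\{\,i:\gr M_i=n\,\}$. Countable codirectedness now forces some $I_k$ to be initial (Remark~\ref{rem:count_codir}(1)); on $I_k$ one checks, via (strong epi, mono)-factorisations and Remark~\ref{rem:rank}, that the $M_i$ form a diagram with \emph{invertible} connecting maps, whose limit is therefore absolute and hence preserved by $F$. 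Without countable codirectedness this pigeon-hole step fails: note that $i\leq j$ implies $\gr M_j\leq\gr M_i$, so grades can grow without bound as one moves downward in $I$, and the minimum-grade stratum need not be initial. Your sketch names the obstacle but does not supply the device that overcomes it.

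One minor correction to the injectivity paragraph: monomorphisms in a locally finitely presentable category are not in general regular (e.g.\ $\mathbb Z\hookrightarrow\mathbb Q$ in commutative rings is monic and epic, hence not a regular mono). The correct reason $H$ preserves monomorphisms is that it preserves pullbacks, and a morphism is monic exactly when its kernel pair is trivial.
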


\begin{proof} (1) By the Adjoint Functor Theorem \cite[Thm. 1.66]{AR}, it is sufficient to prove that $F$ preserves limits. We prove below that it preserves countably   codirected  limits. This is sufficient: it is easy to see that the limit of every diagram $D\colon  \catd\to \catk$ is a countably codirected  limit of limits of diagrams $D/ \catd^{\prime}$, where $\catd^{\prime}$ ranges over countable subcategories of $\catd$.

(2) Let $I$ be a countably   codirected  poset and $D=(D_i)_{i\in I}$ a diagram in $\catk$ with a limit cone $(\pi_i)_{i\in I}$:
\begin{equation*}
	\begin{tikzcd}
	& L && {} \\
	{D_i} && {D_j} & {}
	\arrow["{d_{ij}}"', from=2-1, to=2-3]
	\arrow["{\pi_i}"', from=1-2, to=2-1]
	\arrow[""{name=0, anchor=center, inner sep=0}, "{\pi_j}", from=1-2, to=2-3]
	\arrow[""{name=1, anchor=center, inner sep=0}, "{(i\leq j \text{ in } I)}", draw=none, from=1-4, to=2-4]
	\arrow["\dots"{description, pos=0.7}, draw=none, from=0, to=1]
\end{tikzcd}\end{equation*}
For every cone in $\catl$
\[\begin{tikzcd}
	& Q && {} \\
	{FD_i} && {FD_j} & {}
	\arrow["{Fd_{ij}}"', from=2-1, to=2-3]
	\arrow["{q_i}"', from=1-2, to=2-1]
	\arrow[""{name=0, anchor=center, inner sep=0}, "{q_j}", from=1-2, to=2-3]
	\arrow[""{name=1, anchor=center, inner sep=0}, draw=none, from=1-4, to=2-4]
	\arrow["\dots"{description}, draw=none, from=0, to=1]
\end{tikzcd}\]
we prove that a unique factorization through $(F\pi_i)$ exists.

 We can restrict ourselves to cones with $Q$  finitely presentable  in $\catl$. Indeed, since $\catl_{fp}$ is a dense subcategory of $\catl$, that result then extends to all cones of $FD$.

\vskip1mm

(2a) Existence. First we show that for every morphism $q\colon Q \to FK$ with $K\in \catk$ and $Q$ finitely presentable there is a least subobject $m\colon M \rightarrowtail K$  with $M$ finitely presentable such that $q$ factorizes through $Fm$. For that, we  express
 $K$ as a directed colimit of all its finitely presentable subobjects   (Lemma \ref{lem:fp_subobjs}) and use that $F$ preserves that colimit. Thus $q$ factorizes through $Fm\colon FM\to FK$ for some subobject $m\colon M\rightarrowtail K$ with $M\in\catk_{fp}$. We claim that there exists a least such subobject: one contained in every subobject $m'\colon M' \rightarrowtail K$ with $M'\in\catk_{fp}$ such that $q$ factorizes through $Fm'$.

Indeed, first choose an arbitrary  finitely presentable  subobject $m_0\colon M_0\rightarrowtail K$  such that  $q=Fm_0\cdot u_0$ for some $u_0\colon Q\to FM_0$. If $m_0$ is not the least one, then there exists a  finitely presentable  subobject $m\colon M\rightarrowtail K$  such that
$$q=Fm\cdot  u \text{ (for some $u$)  and } m_0\not\subseteq m.$$
Form the intersection $m_1$ of $m_0$ and $m$ as follows:
\[\begin{tikzcd}
	& {M_1} \\
	{M_0} && M \\
	& K
	\arrow["{m'_0}"', from=1-2, to=2-1]
	\arrow["{m'}", from=1-2, to=2-3]
	\arrow["{m_0}"', from=2-1, to=3-2]
	\arrow["m", from=2-3, to=3-2]
	\arrow["{m_1}", from=1-2, to=3-2]
\end{tikzcd}\]
Since $F$ preserves this pullback and $Fm_0\cdot u_0=Fm\cdot u$, we see that $q$ factorizes through $Fm_1$:
\[\begin{tikzcd}
	& Q \\
	& {FM_1} \\
	{FM_0} && FM \\
	& FK
	\arrow["{Fm'_0}"', from=2-2, to=3-1]
	\arrow["{Fm'}", from=2-2, to=3-3]
	\arrow["{Fm_0}"', from=3-1, to=4-2]
	\arrow["Fm", from=3-3, to=4-2]
	\arrow["{Fm_1}", from=2-2, to=4-2]
	\arrow["{\exists!}"', from=1-2, to=2-2]
	\arrow["{u_0}"', curve={height=18pt}, from=1-2, to=3-1]
	\arrow["u", curve={height=-18pt}, from=1-2, to=3-3]
\end{tikzcd}\]
Since $m_0\not\subseteq m_1$, we know that $m'_0$ is not invertible. Therefore, $M_1$ is a proper subobject of $M_0$ and we get
$$\gr  M_1< \gr  M_0.$$
We now iterate this procedure: either $M_1$ is the desired least subobject, or we find $M_2$ with $\gr  M_2<\gr   M_1$, etc. After less than $\gr   M_0$ steps we obtain the desired least subobject.

\vskip0.7mm

For each $i\in I$ let $m_i\colon M_i\to D_i$ be the least subobject with $M_i$ finitely presentable  such that
$$q_i=Fm_i\cdot r_i \; \text{ for some } r_i\colon Q\to FM_i.$$
Then the sets
$$I_n=\{i\in I; \, \gr  M_i=n\}$$
fulfil $I=\displaystyle{\bigcup_{n\in \mathds{N}}I_n}$. By Remark \ref{rem:count_codir}, some $I_k$ is initial in $I$. Thus the diagram $D_0=(D_i)_{i\in I_k}$ has the same limit as $D$.

Next we prove that each connecting morphism $d_{ij}\colon D_i\to D_j\; (i\leq j\, \text{ in $I_k$})$ of $D_0$ restricts to a morphism $m_{ij}\colon M_i\to M_j$. That is, we have a commutative square as follows:
\[\begin{tikzcd}
	{M_i} & {M_j} \\
	Di & Dj
	\arrow["{m_i}"', from=1-1, to=2-1]
	\arrow["{m_j}", from=1-2, to=2-2]
	\arrow["{m_{ij}}", from=1-1, to=1-2]
	\arrow["{d_{ij}}"', from=2-1, to=2-2]
\end{tikzcd}\]
Let us form a (strong epi, mono)-factorization of $d_{ij}\cdot m_i$ on the left:
\[\begin{tikzcd}
	&&&&&& Q \\
	{M_i} && {M_j} && {FM_i} && {FM_j} \\
	& M &&&& FM \\
	{D_i} && {D_j} && {FD_i} && {FD_j}
	\arrow["e", two heads, from=2-1, to=3-2]
	\arrow["v", dotted, from=3-2, to=2-3]
	\arrow["{m_j}", from=2-3, to=4-3]
	\arrow["m", tail, from=3-2, to=4-3]
	\arrow["{m_i}"', from=2-1, to=4-1]
	\arrow["{d_{ij}}"', from=4-1, to=4-3]
	\arrow["{Fd_{ij}}"', from=4-5, to=4-7]
	\arrow["{Fm_i}"', from=2-5, to=4-5]
	\arrow["Fe", from=2-5, to=3-6]
	\arrow["Fv", dotted, from=3-6, to=2-7]
	\arrow["{Fm_j}", from=2-7, to=4-7]
	\arrow["Fm", from=3-6, to=4-7]
	\arrow["{r_j}", from=1-7, to=2-7]
	\arrow["{r_i}"', from=1-7, to=2-5]
	\arrow["{q_j}", curve={height=-30pt}, from=1-7, to=4-7]
\end{tikzcd}\]
We will find $v$ making that diagram commutative.
The right-hand diagram shows that $q_j$ factorizes through $Fm$. This implies $m_j\leq m$ (by the minimality of $m_j$). Therefore
$$\gr  M\geq \gr  M_j=k.$$
But the strong epimorphism $e\colon M_i\to M$ yields
$$\gr  M\leq \gr  M_i=k,$$
hence $\gr M=k$. Thus $m$ and $m_j$ represent the same subobject of $D_j$: $m=m_j\cdot v$ for some isomorphism $v\colon M\to M_j$. The desired morphism is
$$m_{ij}=v\cdot e.$$
Indeed, $m_j\cdot m_{ij}=m_j\cdot v\cdot e=m\cdot e=d_{ij}\cdot m_i$. Moreover, since $e$ is a strong epimorphism, so is $m_{ij}$, and thus, since $M_i$ and $M_j$ have the same grade, $m_{ij}$ is invertible $(i\leq j \, \text{ in } I_k)$ (Remark \ref{rem:rank}).

The   codirected  diagram $\hat{D}$ of objects $M_i$ and morphisms $m_{ij}\; (i\leq j \, \text{ in } I_k)$ has invertible connecting morphisms, hence its limit (with invertible limit maps) is clearly absolute. Thus $F$ preserves it. The morphisms $r_i\colon Q\to FM_i$ form a cone of $F\hat{D}$: in the right-hand diagram above the upper part commutes because $Fm_j$ is monic, and by post-composing by $Fm_j$ one gets $q_j=Fd_{ij}\cdot q_i$. If $\hat{\pi}_i\colon \hat{L}\to M_i \, (i\in I_k)$ is a limit of $\hat{D}$, we obtain a unique morphism
$$r\colon Q\to F\hat{L}\, \text{ with } r_i=F\hat{\pi}_i\cdot r\; \; (i\in I_k).$$
The natural transformation from $\hat{D}$ to $D_0$ with components $m_i\colon M_i\to D_i\, (i\in I_k)$ yields (since $D$ and $D_0$ have the same limit) a morphism $s\colon \hat{L}\to L$ with $m_i\cdot \hat{\pi}_i=\pi_i\cdot s\; (i\in I_k)$. The desired factorization of $(q_i)$ through $(F\pi_i)$ is given by
$$Fs\cdot r\colon Q\to FL.$$
Indeed, for $i\in I_k$ we have $F\pi_i\cdot (Fs\cdot r)=Fm_i\cdot F\hat{\pi}_i\cdot r= Fm_i\cdot r_i=q_i$.

\vspace{1mm}

(2b) Uniqueness. Given $u,v\colon Q\to FL$ merged by $F\pi_i$ for every $i\in I$, we prove $u=v$.   Form the directed colimit of all  finitely presentable  subobjects $m\colon M \rightarrowtail L$ of $L$ in $\catk$ (see Lemma \ref{lem:fp_subobjs}).
Both $u$ and $v$ factorize through $Fm$ for one of these subobjects, since $F$ preserves that directed colimit and $Q\in \catl_{fp}$. Let $u',\, v'$ be the corresponding factorizations:
\[\begin{tikzcd}
	Q && FL && {FD_i} \\
	&& FM
	\arrow["v"', shift right, from=1-1, to=1-3]
	\arrow["u", shift left=2, from=1-1, to=1-3]
	\arrow["{u'}"{pos=0.7}, shift right, from=1-1, to=2-3]
	\arrow["{v'}"'{pos=0.6}, shift right=4, from=1-1, to=2-3]
	\arrow["{F\pi_i}", dashed, from=1-3, to=1-5]
	\arrow["Fm"', from=2-3, to=1-3]
\end{tikzcd}\]
The proof of $u=v$ will be finished when we verify that there exists $i\in I$ such that ${\pi}_i \cdot m$ is monic. Indeed, then $F{\pi}_i \cdot Fm$ is monic, thus $u' =v'$, which implies $u=v$.

We proceed analogously to Item (2a). For each $i \in I$ we find the least subobject $\bar{m}_i \colon\bar{M}_i \to D_i$ through which the composite $\pi_i \cdot m$ factorizes:
\[\begin{tikzcd}
	M & {\bar{M}_i} \\
	L & {D_i}
	\arrow["{\bar{\pi}_i}", from=1-1, to=1-2]
	\arrow["m"', from=1-1, to=2-1]
	\arrow["{\pi_i}"', from=2-1, to=2-2]
	\arrow["{\bar{m}_i}", from=1-2, to=2-2]
\end{tikzcd}\]
We conclude that there exists an initial subset $I_k \subseteq I$ such that all $\bar{M}_i$ for $i \in I_k$ have the same grade.

Next for each $i \leq j$ in $I_k$ we factorize $d_{ij} \cdot \bar{m}_i$ as a strong epimorphism $\bar{e} : \bar{M}_i \to \bar{M}$ followed by a monomorphism $\bar{m}$:
\[\begin{tikzcd}
	& M \\
	{\bar{M}_i} && {\bar{M}_j} \\
	& {\bar{M}} \\
	{D_i} && {D_j}
	\arrow["{d_{ij}}"', from=4-1, to=4-3]
	\arrow["{\bar{m}}", tail, from=3-2, to=4-3]
	\arrow["{\bar{m}_i}"', from=2-1, to=4-1]
	\arrow["{\bar{m}_j}", from=2-3, to=4-3]
	\arrow["v", dashed, from=3-2, to=2-3]
	\arrow["{\bar{e}}", two heads, from=2-1, to=3-2]
	\arrow["{\bar{\pi}_i}"', from=1-2, to=2-1]
	\arrow["{\bar{\pi}_j}", from=1-2, to=2-3]
\end{tikzcd}\]
	We conclude that ${\pi}_j \cdot m$ ($=\bar{m}_j\cdot \bar{\pi}_j$) factorizes through $\bar{m}$. Arguing as in (2a), we obtain a morphism $v$ such that the above diagram commutes.
	For the morphism ${\bar{m}}_{ij}= v \cdot \bar{e}$ we get the following commutative square
	\[\begin{tikzcd}
		{\bar{M}_i} & {\bar{M}_j} \\
		{D_i} & {D_j}
		\arrow["{d_{ij}}"', from=2-1, to=2-2]
		\arrow["{\bar{m}_i}"', from=1-1, to=2-1]
		\arrow["{\bar{m}_j}", from=1-2, to=2-2]
		\arrow["{\bar{m}_{ij}}", from=1-1, to=1-2]
	\end{tikzcd}\]
	Moreover each ${\bar{m}}_{ij}$ is invertible.
	
	This defines a diagram $\hat{D}$ of all $\bar{M}_i \; (i\in I_k)$. Let $\hat{L}$ be its limit with (invertible) limit maps $\hat{\pi}_i$. This yields the following morphisms:
	$$s\colon \hat{L} \to L;\,  {\pi}_i \cdot s = \bar{m}_i\cdot \hat{\pi}_i$$ and $$r\colon M \to \hat{L};\,  \hat{\pi}_i \cdot r =  \bar{\pi}_i.$$

In the following diagram
\[\begin{tikzcd}
	M \\
	& {\hat{L}} & {\bar{M}} && {} \\
	& L & {D_i} && {}
	\arrow["m"', curve={height=6pt}, from=1-1, to=3-2]
	\arrow["r", from=1-1, to=2-2]
	\arrow["{\bar{\pi}_i}", curve={height=-6pt}, from=1-1, to=2-3]
	\arrow["{\hat{\pi}_i}", from=2-2, to=2-3]
	\arrow["s"', from=2-2, to=3-2]
	\arrow["{\pi_i}"', from=3-2, to=3-3]
	\arrow[""{name=0, anchor=center, inner sep=0}, "{\bar{m}_i}", from=2-3, to=3-3]
	\arrow[""{name=1, anchor=center, inner sep=0}, draw=none, from=2-5, to=3-5]
	\arrow["{(i\in I_k)}"{description}, draw=none, from=0, to=1]
\end{tikzcd}\]
the square and the upper triangle commute. So does the outward shape.
This proves that the left-hand triangle also commutes: use that all $\pi_i$ are collectively monic, because $I_k$ is an initial subset. Since $m$ is monic, we conclude that $r$ is also monic. Now $\hat{\pi}_i$ is invertible and ${\bar{m}}_i$ is monic for each $i \in I_k$, thus the following morphism$${\pi}_i \cdot m = \bar{m}_i \cdot \hat{\pi}_i \cdot r$$ is monic.
\end{proof}

\begin{example}
	Preservation of {\em finite} limits is not sufficient for being a right adjoint even for finitary set functors. Indeed, consider the subfunctor
	$$H\hookrightarrow (-)^{\mathds{N}}$$
	assigning to every set $X$ the set $HX$ of all sequences $a\colon \mathds{N}\to X$ that are eventually constant: there is $n\in \mathds{N}$ with $a(n)=a(m)$ for all $m\geq n$. Then $H$ clearly preserves finite products: a sequence in $X\times Y$ is eventually constant iff both of its projections (to $X^{\mathds{N}}$ and $Y^{\mathds{N}}$) are. $H$ also preserves equalizers. However, $H$ does not preserve the product
	$$A=\prod_{n\in \mathds{N}} A_n\, \text{ where } A_n=\{0,1,\dots, n\}.$$
	Indeed, $HA_n$ contains the sequence $s_n=(0,1,\dots, n,n, n, \dots)$. Thus $(s_n)_{n\in \mathds{N}}\in \Pi_{n\in \mathds{N}}HA_n$. But no element of $HA$ corresponds to $(s_n)$.
	
\end{example}

\begin{remark}\label{rem:metric}
The theorem above can be extended beyond locally finitely presentable categories. This enables us adding to our list of examples categories such as metric spaces or complete metric spaces.

Let $\Met$ be the category of extended metric spaces (i.e., we allow the distance $\infty$) and non-expansive maps. This category is not locally finitely presentable: no non-empty space is finitely presentable \cite[Rem. 2.7]{AR1}.
However, a slight modification on the conditions (1)-(6) of Remark \ref{rem:graduated}, with  finite spaces in the place of finitely presentable objects, allows us to recapture the proof of Theorem \ref{thm:main} for finitary endofunctors of $\Met$ (see Proposition \ref{pro:met} below).
The grades are simple: we use the cardinality of the finite space.

Analogously, for the full subcategory of complete spaces $\CMet$ the choice of finite (thus complete) spaces works. Directed colimits are described in \cite[Prop. 6.3]{ADV}.

\end{remark}

\begin{lemma}In $\Met$ and $\CMet$ regular monomorphisms are precisely the closed isometric embeddings.
	\end{lemma}
	
	\begin{proof} Every regular monomorphism in $\Met$ or $\CMet$ is a closed isometric embedding. Indeed, for two morphisms $f, g\colon B\to C$ the subspace $A=\{b\in B; \, f(b)=g(b)\}$ of $B$ is closed, and the inclusion map $e\colon A\to B$ is an equalizer of $f$ and $g$.
		
		Conversely, let $e\colon A\to B$ be a closed isometric embedding. Without loss of generality, $A$ is a subspace of $B$ and $e$ is the inclusion map. Define a space $C$ by the following pushout
\[\begin{tikzcd}
	& A \\
	B && B \\
	& C
	\arrow["e"', hook', from=1-2, to=2-1]
	\arrow["e", hook, from=1-2, to=2-3]
	\arrow["{{m_0}}"', from=2-1, to=3-2]
	\arrow["{{m_1}}", from=2-3, to=3-2]
\end{tikzcd}\]
	We can describe $C$ as the set $A+(B-A)\times\{0,1\}$ with the following metric $d_C$: for $i=0,1$ the subspace $A+(B-A)\times\{i\}$ carries the metric determined by (the obvious isomorphism to) the space $(B,d_B)$; elements $(x,0)$ and $(y,1)$ with $x,y\in B-A$ have distance 
	$$d_C((x,0), (y,1))=\inf_{a\in A}\{d_B(x,a)+d_B(a,y)\}.$$
	Since $A$ is closed, $d_C((x,0),(y,1))\not=0$. It is easy to verify that $d_C$ is a well-defined metric, and that the obvious embeddings
	$$m_i\colon B\to C \;\; (i=0,1)$$
	form a pushout of $e$ with itself. 
	
	Clearly
	 the embedding $e$ is the equalizer of $m_0$ and $m_1$.
		\end{proof}

\begin{proposition}\label{pro:met}
	A finitary endofunctor of $\Met$ or $\CMet$ is a right adjoint iff it preserves countable limits.
\end{proposition}

\begin{proof}
We present a proof for $\Met$, that for $\CMet$ is analogous.

	We first need to establish some properties which show that, in a sense, finite spaces can substitute finitely presentable objects.
	
	(i)
	In $\Met$ epimorphisms are the morphisms with a dense image. Thus $\Met$ has the (epi, regular mono) factorization system, see \cite[Ex. 3.16]{AR1}.
	Observe that regular monomorphisms into $B$ with finite domains precisely represent the finite subspaces of $B$.
	
	(ii) Every space is a canonical directed colimit of the diagram of its finite subspaces. The colimit maps and connecting morphisms are regular monomorphisms. For a description of directed colimits see \cite[Prop. 2.9]{ADV}.
	
		(iii) Let $D$ be a directed diagram of finite spaces with connecting maps regular monic.
	Then every morphism $f\colon M\to \text{colim}\, D$, where $M$ is a finite space, factorizes through some colimit map. Indeed, using (i) and (ii) we can assume that for the collection $D_i\, (i\in I)$ of objects of $D$, given $i\leq j$ in $I$ the connecting map $D_i\to D_j$ is the inclusion map of a subspace of $D_j$. Then $\text{colim}\, D$ is simply the union $\displaystyle{\bigcup_{i\in I}D_i}$ with the induced metric.
	For $f\colon M\to \displaystyle{\bigcup_{i\in I}D_i}$ there exists $j\in J$ with $f[M]
\subseteq D_j$. Since $f$ is nonexpanding, and $D_j$ is a subspace of $\displaystyle{\bigcup_{i\in I}D_i}$, it follows that the codomain restriction of $f$ to $f'\colon M\to D_j$ is nonexpanding. This is the desired factorization through the colimit map  $D_j\hookrightarrow \text{colim}\, D$.

We are ready to follow the steps of the proof of Theorem \ref{thm:main}.

 (1) We only need to prove that the given finitary endofunctor
 $F$ preserves countably codirected limits. Then it preserves limits. Now $\Met$ has a cogenerator $\mathds{R}$ (with the Euclidean metric). Indeed, for every space $X$ and every element $x\in X$ the distance function
 $$d(x,-)\colon X\to \mathds{R}$$
 is nonexpanding. Since $d(x,-)\not=d(y,-)$ whenever $x\not= y$,  $\mathds{R}$ cogenerates $\Met$. By the Special Adjoint Functor Theorem $F$ is a right adjoint.

 (2) Let $l_i\colon L\to D_i (i\in I)$ be a countably codirected limit of a diagram $D$. Using (ii) above, it is sufficient to prove for every finite space $Q$ that each cone $q_i\colon Q\to FD_i\, (i\in I)$ uniquely factorizes through $(Fl_i)_{i\in I}$.

 (2a) Existence. For every space $K$ and every morphism $q\colon Q\to FK$ there exists a least subspace $m\colon M\rightarrowtail K$ with $M$  finite such that $q$ factorizes through $Fm$. This follows from (ii) above and $F$ preserving directed colimits and pullbacks, precisely as in the proof of Theorem \ref{thm:main}.
 We thus obtain for each $i\in I$ the least finite subspace $m_i\colon M_i\hookrightarrow D_i$ with $q_i=Fm_i\cdot q_i^{\prime}$.  Put $I_n=\{i\in I;\, \text{card}\, M_i=n\}$. Some $I_k$ is initial in $I$. The argument that for $i\leq j$ in $I_k$, we have $m_{ij}\colon M_i\to M_j$ with $d_{ij}\cdot m_i=m_j\cdot m_{ij}$ is as above, just using the (epi, regular mono) factorizations. We obtain a diagram $\hat{D}$ of all $M_i$, $i\in I_k$, and all $m_{ij}$. The latter are bijections (because they are monic and $\text{card}\, M_i=k=\text{card}\, M_j$), and being regular monomorphisms, they are invertible. The rest is completely analogous to the proof of \ref{thm:main}.

 (2b) Uniqueness. With the modifications of the proof of \ref{thm:main} we have seen in item (2a), the proof of (2b) in \ref{thm:main} works completely analogously.
	\end{proof}

\section{Absolute intersections}

In categories such as $K$-$\Vec$ and $\Set_p$ finite intersections are absolute limits (preserved by all functors). We prove this, using ideas of Trnkov\'a \cite{T1}  who proved that {\em nonempty} intersections in $\Set$ are absolute (see Remark \ref{rem:finite products}).

\begin{definition}
A  category  $\catk$ has {\em absolute intersections} provided that all monomorphisms split, and for every intersection of monomorphisms $m$ and $m'$
\begin{equation}\label{eq:(1)}
\begin{tikzcd}
	& C \\
	B & {} & {B'} \\
	& A
	\arrow["{m'}", tail, from=2-3, to=3-2]
	\arrow[""{name=0, anchor=center, inner sep=0}, "{i'}", tail, from=1-2, to=2-3]
	\arrow["{e'}"', curve={height=18pt}, dashed, two heads, from=2-3, to=1-2]
	\arrow["m"', tail, from=2-1, to=3-2]
	\arrow["e", curve={height=-18pt}, dashed, two heads, from=3-2, to=2-1]
	\arrow[""{name=1, anchor=center, inner sep=0}, "i"', tail, from=1-2, to=2-1]
	\arrow[shift left=3, shorten <=4pt, shorten >=4pt, no head, from=1, to=2-2]
	\arrow[shift right=3, shorten <=4pt, shorten >=4pt, no head, from=0, to=2-2]
\end{tikzcd}
\end{equation}
there exist splittings $e$ of $m$ and $e'$ of $i'$ with
\begin{equation}\label{eq:(2)}
e\cdot m'=i\cdot e'\colon B'\to B.
\end{equation}
\end{definition}

\begin{proposition}\label{pro:absolute} The pullback in the above definition is absolute.
\end{proposition}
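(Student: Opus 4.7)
The plan is to show that for every functor $F\colon \catk\to\catl$, the image $(FC,\, Fi,\, Fi')$ is a pullback of $(Fm,\, Fm')$ in $\catl$. I take an arbitrary competing cone $f\colon X\to FB$, $f'\colon X\to FB'$ with $Fm\cdot f = Fm'\cdot f'$ and construct the unique factorization through $FC$.

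Uniqueness is immediate and forces the candidate: from $e'\cdot i' = 1_C$ we get $Fe'\cdot Fi' = \id_{FC}$, so any $h$ with $Fi'\cdot h = f'$ must equal $Fe'\cdot f'$. I therefore set $h = Fe'\cdot f'$ and verify the two pullback equations.

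The equation $Fi\cdot h = f$ is a formal rewriting: apply \eqref{eq:(2)} to replace $i\cdot e'$ by $e\cdot m'$, then use $Fm'\cdot f' = Fm\cdot f$ and $e\cdot m = 1_B$. The second equation $Fi'\cdot h = f'$ is the delicate one: it amounts to showing that the idempotent $F(i'\cdot e')$ on $FB'$ fixes $f'$. My plan is to deduce this by post-composing with $Fm'$, exploiting the fact that \emph{every} monomorphism in $\catk$ splits (the first clause of the definition). In particular $m'$ splits, so $Fm'$ is split monic, hence monic, in $\catl$. A short calculation using $m'\cdot i' = m\cdot i$, $i\cdot e' = e\cdot m'$ and $e\cdot m = 1_B$ shows
\[ Fm'\cdot F(i'\cdot e')\cdot f' \;=\; Fm\cdot f \;=\; Fm'\cdot f', \]
and monicity of $Fm'$ forces $F(i'\cdot e')\cdot f' = f'$, as required.

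The main obstacle is precisely this second equation: the natural candidate $h = Fe'\cdot f'$ dispatches one leg of the cone by direct rewriting, but the other leg relies on the observation that a splitting of $m'$ — which plays no role in the compatibility \eqref{eq:(2)} — nonetheless survives under any $F$ and renders $Fm'$ monic in $\catl$. This is exactly why the definition demands that \emph{all} monomorphisms split, rather than merely $m$ and $i'$.
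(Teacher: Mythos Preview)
Your proof is correct and essentially identical to the paper's: both set the factorization to $Fe'\cdot f'$ (the paper writes $Fe'\cdot u'$), verify the $Fi$-leg by the rewriting $i\cdot e' = e\cdot m'$ together with $e\cdot m=\id$, and verify the $Fi'$-leg by post-composing with the split monic $Fm'$ via the same chain of equalities. The only cosmetic difference is that you deduce uniqueness from the splitting $e'\cdot i'=\id$ while the paper invokes that $Fi$ is (split) monic.
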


\begin{proof} Given a functor $F\colon \catk \to \catl$ and a commutative square in $\catl$ as follows
\begin{equation}\label{eq:(3)}\begin{tikzcd}
	& U \\
	FB && {FB'} \\
	& FA
	\arrow["u"', from=1-2, to=2-1]
	\arrow["{u'}", from=1-2, to=2-3]
	\arrow["Fm"', from=2-1, to=3-2]
	\arrow["{Fm'}", from=2-3, to=3-2]
\end{tikzcd}
\end{equation}
we prove that the desired factorization of $(u,u')$ through $(Fi, Fi')$ is
$$v=Fe'\cdot u'\colon U\to FC.$$
The uniqueness is clear since $Fi$ is monic. Our task is to verify that the diagram below commutes:
\[\begin{tikzcd}
	& U \\
	& {FB'} \\
	FB & FC & {FB'}
	\arrow["u"', curve={height=6pt}, from=1-2, to=3-1]
	\arrow["{u'}", curve={height=-6pt}, from=1-2, to=3-3]
	\arrow["Fi", from=3-2, to=3-1]
	\arrow["{Fi'}"', from=3-2, to=3-3]
	\arrow["{u'}"', from=1-2, to=2-2]
	\arrow["{Fe'}"', from=2-2, to=3-2]
\end{tikzcd}\]
The left-hand triangle does:
$$\begin{array}{rlr}Fi\cdot (Fe'\cdot u')&=Fe\cdot Fm'\cdot u'& \text{by \eqref{eq:(2)}}\\
&=Fe\cdot Fm\cdot u &\text{by \eqref{eq:(3)}}\\
&=u& \text{as $e\cdot m=\id$.}
\end{array}$$
The right-hand triangle commutes because $Fm'$ is monic, and we have
$$\begin{array}{rlr}Fm'\cdot u'&=Fm\cdot u& \text{by \eqref{eq:(3)}}\\
&=Fm\cdot Fe\cdot Fm\cdot u& \text{as $e\cdot m=\id$}\\
&=Fm\cdot Fe\cdot Fm'\cdot u'&\text{by \eqref{eq:(3)}}\\
&=Fm\cdot Fi\cdot Fe'\cdot u'&\text{by \eqref{eq:(2)}}\\
&=Fm'\cdot (Fi'\cdot Fe'\cdot u') &\text{by \eqref{eq:(1)}}.
\end{array}$$
\end{proof}

\begin{examples}\label{exa:absolute} (1) The  category  $K$-$\Vec$ has absolute intersections. Without loss of generality we assume that in the pullback \eqref{eq:(1)} the objects fulfil
$$B\subseteq A, \, B'\subseteq A \; \text{and}\; C=B\cap B',$$
and the morphisms are the inclusion maps. We decompose the spaces $B$ and $B'$ as follows:
$$B=B_0\oplus C\; \text{ and } B'=B'_0\oplus C.$$
Then $A$ has the following decomposition:
$$A=A_0 \oplus B_0 \oplus B'_0\oplus C.$$
The desired splittings are as follows:
\[\begin{tikzcd}
	& C \\
	{B_0\oplus C} && {B'_0\oplus C} \\
	& {A_0\oplus B_0\oplus B'_0\oplus C}
	\arrow[tail, from=1-2, to=2-1]
	\arrow[tail, from=1-2, to=2-3]
	\arrow["{[0,\id]}"', shift right, curve={height=12pt}, from=2-3, to=1-2]
	\arrow[tail, from=2-1, to=3-2]
	\arrow["{[0,\id,0,\id]}"{pos=0.6}, shift left=2, curve={height=-12pt}, from=3-2, to=2-1]
	\arrow[tail, from=2-3, to=3-2]
\end{tikzcd}\]

(2) The  category  $\Set_p$ has absolute intersections. Without loss of generality we assume that, again, the morphisms in the pullback \eqref{eq:(1)} are inclusion maps. In particular, all four objects have the same specified element $c\in C$. Define $e\colon (A,c)\to (B,c)$ and $e'\colon (B',c)\to (C,c)$ by
$$e(x)=\left\{\begin{array}{ll}y&\text{if $y\in B$}\\
c&\text{else}
\end{array}\right.  \hspace{15mm} e'(z)=\left\{\begin{array}{ll}z&\text{if $z\in C$}\\
c&\text{else}.
\end{array}\right.$$
These are the desired splittings.
\end{examples}

\begin{remark}\label{rem:finite products} (1) We conclude that an endofunctor of $K$-$\Vec$ or $\Set_p$ preserves (finite) products iff it preserves (finite) limits. This also follows from results presented by Trnkov\'a in \cite{T3} (Prop. 4 and Example B). In that paper Trnkov\'a studies categories $\catk$  such that  every functor with domain $\catk$ preserving products preserves limits. Besides vector spaces and pointed sets, Trnkov\'a shows that examples of such categories $\catk$ include sets with monomorphisms and topological $T_1$-spaces with closed maps.

(2) Every nonempty finite intersection in $\Set$ is absolute. Indeed, this is analogous to the case $\Set_p$: given subsets $m\colon B\hookrightarrow A$ and $m'\colon B'\hookrightarrow A$ with $c\in B\cap B'$, we define $e\colon A\to B$ and $e'\colon B' \to B\cap B'$ as in Example \ref{exa:absolute}(2).
Preservation of nonempty intersections was proved by Trnkov\'a (cf. \cite[Proposition 2.1]{T1}).
\end{remark}

\section{Set functors preserving countable products}

We have seen that for endofunctors of $\Set_p$ there is no difference between preservation of countable products and countable limits. Is the same true for $\Set$? Not quite:

\begin{example} The functor $C_{01}$ given by $C_{01}\emptyset=\emptyset$ and $C_{01}X=1$ for all $X\not=\emptyset$ clearly preserves products. But it does not preserve the intersection of the coproduct injections of $1+1$:
\[\begin{tikzcd}
	& \emptyset &&&& \emptyset \\
	1 & {} & 1 && 1 && 1 \\
	& {1+1} &&&& 1
	\arrow[""{name=0, anchor=center, inner sep=0}, from=1-2, to=2-1]
	\arrow[""{name=1, anchor=center, inner sep=0}, from=1-2, to=2-3]
	\arrow[from=2-1, to=3-2]
	\arrow[from=2-3, to=3-2]
	\arrow["{C_{01}}", shorten <=10pt, shorten >=10pt, maps to, from=2-3, to=2-5]
	\arrow[from=1-6, to=2-5]
	\arrow[from=1-6, to=2-7]
	\arrow[from=2-5, to=3-6]
	\arrow[from=2-7, to=3-6]
	\arrow[shift left=4, shorten <=4pt, shorten >=4pt, no head, from=0, to=2-2]
	\arrow[shift right=4, shorten <=4pt, shorten >=4pt, no head, from=1, to=2-2]
\end{tikzcd}\]
This is the unique such set functor (up to natural isomorphism), as we now prove.
\end{example}

\begin{definition}(Trnkov\'a \cite{T2}) Let $H$ be a set functor. An element $x\in HX$ is {\em distinguished} if for all parallel pairs $f,g\colon X\to Y$ we have $Hf(x)=Hg(x)$.
\end{definition}

\begin{example}\label{exa:distinguished} (1) Every element $x\in H\emptyset$ is distinguished.

(2) If $x\in HX$ is distinguished, so is $Hf(x)\in HY$ for each $f\colon X\to Y$.
\end{example}

The following result can be derived from \cite[Prop. I.4]{T1} and \cite[Prop. II.6]{T2}.  We present a short proof for the convenience of the reader.

\begin{proposition}\label{pro:distinguished}
Every set functor without distinguished elements preserves finite intersections.
\end{proposition}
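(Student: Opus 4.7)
The plan is to reduce to binary intersections and then distinguish two subcases according to whether the intersection is empty. For $n$-ary intersections, one computes them by iterated binary pullbacks of monomorphisms, so it suffices to show that for any monomorphisms $m\colon B\hookrightarrow A$ and $m'\colon B'\hookrightarrow A$, the image under $H$ of the pullback square is again a pullback. When $B\cap B'$ is nonempty, Trnkov\'a's result recalled in Remark \ref{rem:finite products}(2) guarantees the intersection is absolute, hence preserved by every functor. So the only case requiring work is $B\cap B'=\emptyset$. Since $H$ has no distinguished elements, $H\emptyset=\emptyset$ by Example \ref{exa:distinguished}(1); what remains to verify is that no pair $(b,b')\in HB\times HB'$ satisfies $Hm(b)=Hm'(b')$.

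I would argue by contradiction: suppose such a pair exists. Because $H\emptyset=\emptyset$, both $B$ and $B'$ are nonempty, so I may fix $b'_0\in B'$. The key construction exploits the disjointness $B\cap B'=\emptyset$. Given any parallel pair $f,g\colon B\to Y$ with $Y$ nonempty, define $\hat f,\hat g\colon A\to Y\sqcup B'$ by setting them equal to $f,g$ respectively on $B$, to the identity on $B'$, and to the constant $b'_0$ elsewhere. Then $\hat f$ and $\hat g$ agree on $B'$, both restricting there to the coproduct injection $B'\hookrightarrow Y\sqcup B'$, so $\hat f\circ m'=\hat g\circ m'$. Applying $H$ and evaluating at $b'$ yields $H\hat f(Hm'(b'))=H\hat g(Hm'(b'))$, and substituting $Hm(b)=Hm'(b')$ together with $\hat f\circ m=\iota_Y\circ f$, $\hat g\circ m=\iota_Y\circ g$ (where $\iota_Y\colon Y\hookrightarrow Y\sqcup B'$ is the inclusion) produces $H\iota_Y(Hf(b))=H\iota_Y(Hg(b))$.

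To conclude, I would observe that $\iota_Y$ is split monic: any retraction $r\colon Y\sqcup B'\to Y$ collapsing $B'$ to a chosen $y_0\in Y$ satisfies $r\circ\iota_Y=\id_Y$, so $H\iota_Y$ is split monic, hence injective. This forces $Hf(b)=Hg(b)$. Since $f,g$ were arbitrary (the case $Y=\emptyset$ is vacuous, as it would force $B=\emptyset$ against $B\ne\emptyset$), we have shown that $b$ is distinguished, contradicting the hypothesis. The main obstacle is locating the right auxiliary maps $\hat f,\hat g$: one needs to glue given data on $B$ with a common piece on $B'$ into a single codomain in a way that forces $H$ to register the disjointness; disjointness of $B$ and $B'$ inside $A$ is precisely what makes such a gluing well-defined, and the splittability of $\iota_Y$ then does the rest.
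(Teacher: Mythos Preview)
Your argument is correct and follows the same overall strategy as the paper: reduce to the disjoint case (nonempty intersections being absolute), assume a common image exists, and derive a contradiction by exhibiting a distinguished element via a gluing that exploits disjointness.

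The paper's execution differs in one simplifying detail. Instead of showing that $b\in HB$ is distinguished, it shows directly that the common value $t=Hm(b)=Hm'(b')\in HA$ is distinguished. For arbitrary $f,g\colon A\to Y$ one picks any $h\colon A\to Y$ agreeing with $f$ on $B$ and with $g$ on $B'$ (possible precisely because $B\cap B'=\emptyset$); then $Hf(t)=H(f\circ m)(b)=H(h\circ m)(b)=Hh(t)$ and symmetrically $Hg(t)=Hh(t)$, so $Hf(t)=Hg(t)$. This sidesteps your auxiliary coproduct $Y\sqcup B'$ and the split-mono argument for $\iota_Y$, though both routes are perfectly valid.
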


\begin{proof} Let $H\colon \Set \to \Set$ have no distinguished element. By the above example, $H\emptyset=\emptyset$. We already know from Remark \ref{rem:finite products} that $H$ preserves nonempty intersections. Thus we only need to consider disjoint subsets $A_1,\, A_2$ of $B$:
\[\begin{tikzcd}
	& \emptyset \\
	{A_1} && {A_2} \\
	& B
	\arrow[from=1-2, to=2-1]
	\arrow[from=1-2, to=2-3]
	\arrow["{m_1}"', tail, from=2-1, to=3-2]
	\arrow["{m_2}", tail, from=2-3, to=3-2]
\end{tikzcd}\]
Suppose $H$ does not preserve this pullback, then we prove that it has a distinguished element. Since $H\emptyset=\emptyset$ and $H$ does not preserve the above pullback, there exist $t_i\in HA_i$ with $Hm_1(t_1)=Hm_2(t_2)=t$. The element $t\in HB$ is distinguished. Indeed, for every pair $f,g\colon B\to Y$ we can choose a map $h\colon B\to Y$ coinciding on $A_1$ with $f$ and on $A_2$ with $g$:
$$h\cdot m_1=f\cdot m_1\; \text{ and }\; h\cdot m_2=g\cdot m_2.$$
Then
$$Ff(t)=F(f\cdot m_1)(t_1)=Fh(Fm_1(t_1))=Fh(t)$$
as well as
$$Fg(t)=F(g\cdot m_2)(t_2)=Fh(Fm_2(t_2))=Fh(t).$$
\end{proof}

\begin{theorem}
Every set functor $H\not\simeq C_{01}$ preserving finite products preserves finite limits.
\end{theorem}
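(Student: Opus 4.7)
The plan is to case-split on the value of $H\emptyset$ and, in the case $H\emptyset=\emptyset$, on whether $H$ has a distinguished element. First I would dispose of the case $H\emptyset\neq\emptyset$: applying $H$ to $\emptyset\times\emptyset=\emptyset$ yields $|H\emptyset|^2=|H\emptyset|$, so $|H\emptyset|=1$, and applying $H$ to $\emptyset\times X=\emptyset$ forces $|HX|=1$ for every $X$. This identifies $H$ with $\mathrm{Hom}(\emptyset,-)$, which is representable and preserves all limits.

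From now on assume $H\emptyset=\emptyset$, so that $H1=1$ by product preservation. I would next reduce the task to showing preservation of finite intersections: in $\Set$, every equalizer is the pullback of the diagonal $\Delta_Y\colon Y\to Y\times Y$ (a monomorphism) along $(f,g)\colon X\to Y\times Y$, and every pullback of a mono $m\colon B\hookrightarrow C$ along a map $f\colon A\to C$ is the intersection of $\langle \id_A,f\rangle(A)$ and $A\times B$ inside $A\times C$. Both subobjects arise from $f$ and $m$ by finite products, so preservation of products and intersections yields preservation of equalizers, hence of all finite limits.

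If $H$ has no distinguished element, Proposition~\ref{pro:distinguished} gives intersection preservation and we are done. Otherwise, pick a distinguished $x\in HX_0$ with $X_0\neq\emptyset$; by Example~\ref{exa:distinguished}(2), $H(!_{X_0})(x)\in H1=1$ is distinguished, so the unique point $*\in H1$ is distinguished. Equivalently, for every $Y\neq\emptyset$ the element $t_Y:=Hc_y(*)\in HY$ is independent of the choice of $y\in Y$. I claim this forces $H\simeq C_{01}$, contradicting the hypothesis and completing the proof. For any $X$ with $|X|\geq 2$, pick $a\neq b$ in $X$ and define $\phi\colon X\times X\to X$ by $\phi(a,v)=v$ and $\phi(u,v)=b$ for $u\neq a$. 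A direct check gives $\phi\cdot\langle c_a\cdot !_X,\id_X\rangle=\id_X$ and $\phi\cdot\langle c_b\cdot !_X,\id_X\rangle=c_b\cdot !_X$. Applying $H$ and using $H(c_a\cdot !_X)=H(c_b\cdot !_X)=\mathrm{const}_{t_X}$, product preservation turns both morphisms into $y\mapsto H\phi(t_X,y)$; so $\id_{HX}=\mathrm{const}_{t_X}$, forcing $|HX|=1$. Together with $H1=1$ and $H\emptyset=\emptyset$, this shows $H\simeq C_{01}$.

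The main obstacle is the construction of the morphism $\phi$: it has to be a binary operation on $X$ whose two ``slices'' $\phi\langle c_a\cdot !_X,\id_X\rangle$ and $\phi\langle c_b\cdot !_X,\id_X\rangle$ are visibly distinct morphisms in $\Set$ (here $\id_X$ versus a constant), yet get collapsed by $H$ once product preservation forces $c_a$ and $c_b$ to induce the same first coordinate $t_X$. The rest of the argument follows a standard case split together with the reduction of equalizers to pullbacks of monomorphisms.
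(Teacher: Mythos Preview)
Your proof is correct and follows the same overall architecture as the paper: split on $H\emptyset$, then on whether the unique point of $H1$ is distinguished, invoke Proposition~\ref{pro:distinguished} in the non-distinguished case, and in the distinguished case use a binary operation on $X$ to force $|HX|=1$. Your map $\phi$ is, up to swapping the two coordinates, exactly the paper's map $g(u,v)=x_1$ if $v=x_1$, else $u$; the paper pairs it with $f_i=\langle\id,\text{const}\,x_i\rangle$ where you use $\langle c_a,\id\rangle$ and $\langle c_b,\id\rangle$.

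Two differences are worth noting. First, your treatment of $H\emptyset\neq\emptyset$ is more direct: from $\emptyset\times\emptyset=\emptyset$ (with both projections the identity) product preservation forces the diagonal $H\emptyset\to H\emptyset\times H\emptyset$ to be an isomorphism, hence $|H\emptyset|=1$, and then $\emptyset\times X=\emptyset$ gives $|HX|=1$ immediately. The paper instead reuses the $g$-trick in this case as well. Second, you spell out the reduction of finite limits to finite products plus finite intersections (via graphs of maps inside a product), a step the paper leaves implicit when it concludes ``$H$ preserves finite limits'' directly from Proposition~\ref{pro:distinguished}. Both refinements are sound and make the argument a bit cleaner; neither changes the essential strategy.
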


\begin{proof}
Let $H$ preserve finite products.
We know that $H1\simeq 1$, and we put
$$H1=\{a_1\}.$$
Since $H$ preserves the product $\emptyset =\emptyset\times \emptyset$, we have
$$H\emptyset=\emptyset\; \text{ or }\; H\emptyset\simeq 1.$$

(a) Let $H\emptyset$ contain an element $a_0$. Then, by Example \ref{exa:distinguished}, the element $a_1=Ht(a_0)$ (for the unique $t\colon \emptyset\to 1$) is distinguished. For every set $X\not=\emptyset$ we put
$$a_X=Hf(a_1)\; \text{for}\; f\colon 1\to X$$
and prove
$$HX=\{a_X\}\; \text{ for all $X$.}$$
Thus $H$ is naturally isomorphic to the constant functor of value $1$, and preserves limits.

Our equation $HX=\{a_X\}$ holds for $\emptyset$ and $1$, so we can assume that card$\, X\geq 2$. We first observe that $H$ maps every constant function $f\colon X\to Y$ of value $y$, $f=\text{const}\,y$, to the constant function of value $a_Y$:
$$H(\text{const}\, y)=\text{const}\, a_Y.$$
Indeed, we have $f'\colon 1\to Y$ making the left-hand triangle below commutative
\[\begin{tikzcd}
	& 1 &&&& {\{a_1\}} \\
	X && Y && HX && HY
	\arrow["{!}", from=2-1, to=1-2]
	\arrow["{f'}", from=1-2, to=2-3]
	\arrow["f"', from=2-1, to=2-3]
	\arrow["{H!}", from=2-5, to=1-6]
	\arrow["Hf"', from=2-5, to=2-7]
	\arrow["{Hf'}", from=1-6, to=2-7]
\end{tikzcd}\]
Thus the right-hand triangle verifies the statement: since $a_1$ is distinguished, $Hf'(a_1)=a_Y$.
Choose $x_1\not=x_2$ in $X$ and put
$$f_i=\langle \id, \text{const}\, x_i\rangle\colon X\to X\times X\; \; (i=1,2).$$
The projections $\pi_l,\, \pi_r$ make the following diagrams commutative for $i=1,2$:
\[\begin{tikzcd}
	& X &&&& HX \\
	X & {X\times X} & X && HX & {H(X\times X)} & HX
	\arrow["\id"', from=1-2, to=2-1]
	\arrow["{f_i}", from=1-2, to=2-2]
	\arrow["{\text{const}\, x_i}", from=1-2, to=2-3]
	\arrow["{\pi_l}", from=2-2, to=2-1]
	\arrow["{\pi_r}"', from=2-2, to=2-3]
	\arrow["\id"', from=1-6, to=2-5]
	\arrow["{Hf_i}", from=1-6, to=2-6]
	\arrow["{\text{const}\, a_X}", from=1-6, to=2-7]
	\arrow["{H\pi_l}", from=2-6, to=2-5]
	\arrow["{H\pi_r}"', from=2-6, to=2-7]
\end{tikzcd}\]
Since $H$ preserves $X\times X$, the pair $H\pi_l$, $H\pi_r$ is collectively monic. This proves
$$Hf_1=Hf_2\colon HX\to H(X\times X).$$
Next consider the following map
$$g\colon X\times X\to X, \; g(u,v)=\left\{\begin{array}{ll}x_1&\text{if $v=x_1$}\\
u&\text{else.}\end{array}\right.$$
Then the diagram below commutes:
\[\begin{tikzcd}
	X & {X\times X} & X \\
	& X
	\arrow["{\text{const}\, x_1}"', from=1-1, to=2-2]
	\arrow["\id", from=1-3, to=2-2]
	\arrow["g", from=1-2, to=2-2]
	\arrow["{f_1}", from=1-1, to=1-2]
	\arrow["{f_2}"', from=1-3, to=1-2]
\end{tikzcd}\]
Apply $H$ to it and get (using $H(\text{const}\, x_1)=\text{const}\, a_X$) that
$$\id_{HX}=\text{const}\, a_X.$$
This proves $HX=\{a_X\}.$

(b) Let $H\emptyset=\emptyset$. If $a_1\in H1$ is distinguished, then, as in (a), we derive $HX=\{a_X\}$ for all $X\not=\emptyset$. Thus $H$ is naturally isomorphic to $C_{01}$.

If $a_1$ is not distinguished, then $H$ has no distinguished element ($a\in HX$ distinguished implies $Hf(a)$ distinguished for $f\colon X\to 1$). Apply Proposition \ref{pro:distinguished} to conclude that $H$ preserves finite limits.
\end{proof}

\begin{corollary} A finitary set functor $H\not\simeq C_{01}$ is a right adjoint if and only if it preserves countable products.
\end{corollary}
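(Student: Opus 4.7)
The forward implication is immediate: any right adjoint preserves all limits, and in particular all countable products. So the work lies in the converse. Assume $H\colon \Set\to \Set$ is finitary, $H\not\simeq C_{01}$, and $H$ preserves countable products.

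The plan is to deduce that $H$ preserves all countable limits, and then invoke the main finitary adjoint functor theorem of the preceding section (applied with $\catk=\catl=\Set$, which is graduated). First I would observe that, since countable products are in particular finite products, the previous theorem applies directly: $H$ preserves finite products and is not naturally isomorphic to $C_{01}$, hence $H$ preserves finite limits. In particular, $H$ preserves equalizers of parallel pairs.

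Next I would express an arbitrary countable limit as a combination of a countable product with an equalizer. Concretely, if $D\colon \catd\to \Set$ is a diagram whose indexing category $\catd$ has countably many objects and countably many morphisms, then $\lim D$ is the equalizer of the two canonical maps
\[
\prod_{d\in \catd} Dd \; \rightrightarrows \; \prod_{(f\colon d\to d')\in\catd} Dd',
\]
both of whose domains and codomains are countable products. Since $H$ preserves countable products by hypothesis and equalizers by the previous step, and since preservation of limits is compatible with such canonical decompositions, $H$ preserves $\lim D$.

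Finally, since $H$ is finitary and preserves all countable limits, the finitary adjoint functor theorem proved earlier guarantees that $H$ is a right adjoint. There is no real obstacle here beyond assembling the ingredients already established; the only point worth checking carefully is the standard decomposition of a countable limit into a countable product and an equalizer, which ensures that the passage from ``countable products $+$ finite limits'' to ``countable limits'' stays within the countable regime.
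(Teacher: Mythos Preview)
Your argument is correct and is exactly the intended one: the paper states the corollary without proof because it follows immediately by combining the preceding theorem (finite-product preservation plus $H\not\simeq C_{01}$ gives finite-limit preservation) with the main finitary adjoint functor theorem, via the standard reduction of a countable limit to an equalizer of two countable products. There is nothing to add.
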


\bibliographystyle{abbrv}

\end{document}